  \def\su#1{{\sp{[#1]}}}
  \def\<{{\langle}} 
  \def\>{{\rangle}}
  \def\note#1{{}}
  \def\note#1{} 
  \def\rhom#1#2#3{{{\rm Hom}\sb{#1}(#2,#3)}}
  \def\beq{\begin{equation}} 
  \def\eeq{\end{equation}}
  \def\id{\mathrm{id}}
  \def\ut{{\otimes}} 
  \def\ot{{\otimes}}
   \def\tom{\widetilde{\chi}}
    \def\oan#1{\Omega^{#1}(A)}
    \def\oa{\Omega (A)}
     \def\oau{\Omega A}
    \def\oaun#1{\Omega^{#1}\!A}
  \newcounter{zlist}
  \newcounter{blist}
  \newcounter{rlist}
\def\stac#1{\raise-.2cm\hbox{$\stackrel{\displaystyle\otimes}{\scriptscriptstyle{#1}}$}}
\def\cten#1{\raise-.2cm\hbox{$\stackrel{\displaystyle\widehat{\otimes}}
{\scriptscriptstyle{#1}}$}}
  \def\Label#1{\label{#1}\ifmmode\llap{[#1] }\else 
  \marginpar{\smash{\hbox{\tiny [#1]}}}\fi} 
  \def\Label{\label}
  \newtheorem{proposition}{Proposition}[section]
  \newtheorem{lemma}[proposition]{Lemma} 
  \newtheorem{theorem}[proposition]{Theorem} 
  \theoremstyle{definition} 
  \newtheorem{definition}[proposition]{Definition}
  \newtheorem{example}[proposition]{Example}
  \theoremstyle{remark}
  \newcounter{c} 
  \newcommand{\etyk}[1]{\vspace{-7.4mm}$$\begin{equation}\Label{#1} 
  \addtocounter{c}{1}} 
  \renewcommand{\]}{\ifnum \value{c}=1 $$\else \end{equation}\fi} 
\def\ot{\otimes}
\def\KK{{\mathbb K}}
\newcommand{\Cc}{\mathcal{C}}
\def\*C{{}^*\hspace*{-1pt}{\Cc}}
\def\text#1{{\rm {\rm #1}}}
 \def\1{\mathbf{1}}
  \def\tom{\widetilde{\omega}}
\begin{document} 

\title[A note on flat connections]{A note on flat noncommutative connections} 
 \author{Tomasz Brzezi\'nski}
 \address{ Department of Mathematics, Swansea University, 
  Singleton Park, \newline\indent  Swansea SA2 8PP, U.K.} 
  \email{T.Brzezinski@swansea.ac.uk}   

    \date{September 2011} 
  \subjclass[2010]{58B34} 
  \begin{abstract} 
It is proven that every flat connection or covariant derivative $\nabla$ on a left $A$-module $M$ (with respect to the universal differential calculus) induces a right $A$-module structure on $M$ so that  $\nabla$ is a bimodule connection on $M$ or $M$ is a flat differentiable bimodule. Similarly a flat hom-connection on a right $A$-module $M$ induces a compatible left $A$-action. 
\end{abstract} 
  \maketitle

\section{Introduction}
The first aim of this note is to show that every flat connection or covariant derivative $\nabla$ on a left $A$-module $M$ (with respect to the universal differential calculus) induces a right $A$-module structure on $M$ so that  $\nabla$ is a bimodule connection on $M$ \cite[Section~3.6]{Mad:int} or that $M$ is a flat differentiable $A$-bimodule  \cite[Section~2.3]{BegBrz:Ser}. The idea is to use the correspondence between flat connections in noncommutative geometry and comodules of corings \cite[Section~29]{BrzWis:cor}  and explore the recently discovered remarkable fact that the category of comodules over the Sweedler coring is a braided (more precisely, symmetric) monoidal category \cite{AgoCae:cen}. Consequently, the category of modules with  flat covariant derivatives with respect to the universal calculus is a symmetric monoidal category. This note is aimed at noncommutative geometers, hence we will try to make it self-contained for this audience and will not use any of the coring-specific terminology or techniques. The interested reader is encouraged to consult \cite{Brz:fla} for a noncommutative geometry oriented review of the correspondence between comodules and flat connections. 

The second aim of the note is to use the techniques developed in the case of connections to the case of {\em hom-connections} \cite{Brz:con}, and to show that a right $A$-module $M$ that admits a flat hom-connection with respect to the universal differential algebra over $A$ inherits a natural left $A$-action that makes $M$ an $A$-bimodule. In both cases  the existence of a connection or a hom-connection yields the existence of the unital action commuting with the original one. The flatness of a (hom-)connection is responsible for the associativity. This last observation might be recognisable by the geometers  familiar e.g.\ with Frobenius manifolds; see \cite[Chapter~1]{Man:Fro}.

The existence of a connection (with respect to the universal differential algebra) on a left $A$-module $M$ has a purely algebraic meaning of projectivity: $M$ admits a connection if and only if it is a projective $A$-module \cite[Corollary~8.2]{CunQui:alg}. Similarly, the existence of a hom-connection is tantamount with the injectivity of the module \cite[Theorem~2.2]{BrzElK:int}. Thus, from the purely algebraic point of view, this note establishes the existence of a unital (but not necessarily associative on one side) bimodule structure on any projective or injective module. If such a module satisfies additional property corresponding to flatness of the (hom-)connection, then the induced action is necessarily associative.

\section{Every module with a flat connection with respect to the universal differential structure is a flat differentiable bimodule}\label{sec.alg} 
\setcounter{equation}{0}
Let $A$ be an associative algebra with identity over a field $\KK$. Unadorned tensor product is over $\KK$. By a {\em differential algebra over $A$} we mean  a non-negatively graded differential graded algebra $(\oa, d)$ with $\oan 0 = A$.  Any algebra $A$ admits the {\em universal differential graded algebra $(\oau, d)$} over $A$ defined as follows. $\oaun n = \oaun 1\ot_A\oaun 1\ot_A \ldots \ot_A \oaun 1$, i.e.\  $\oau: = T_A(\oaun 1)$ is the tensor algebra of the $A$-bimodule $\oaun 1 = \ker \mu$, where $\mu: A\ot A\to A$ is the multiplication map. In view of the natural identification $A\ot_A M \cong A$, $a\ot_A m\mapsto am$, of left $A$-modules,  $\oaun n$ can be understood as the subspace of $A^{\ot n+1}$ consisting of those tensors that vanish upon the multiplication of any two adjacent factors, i.e.\
$$
\oaun n = \{\sum_i a_0^i \otimes \ldots \otimes a_n^i\; |\; \forall k=0,\ldots , n-1, \,
\sum_i a_0^i \otimes \ldots \otimes a^i_{k-1} \ot a^i_{k}a^i_{k+1}\ot a^i_{k+2} \ot \ldots \ot a_n^i =0\}.
$$
The multiplication in $\oau$ is given by the restriction of the concatenation 
$$
(a_0\ot \ldots \ot a_n)(b_0\ot \ldots \ot b_m) = a_0\ot \ldots a_{n-1}\ot a_nb_0\ot b_1 \ot \ldots \ot b_m.
$$
The differential is defined as $d: A\to \oaun 1$, $a\mapsto 1\ot a - a\ot 1$ and extended to the whole of $\oau$ by the graded Leibniz rule and $d^2=0$, so that
$$
d\left(\sum_i a_0^i \otimes \ldots \otimes a_n^i \right) = \sum_{k=0}^n (-1)^k \sum_i a_0^i \otimes \ldots a_{k-1}^i \ot 1 \ot a_{k}^i \otimes \ldots \ot  a_n^i.
$$
 This is the differential graded algebra we are interested in in this note.

Given a left $A$-module $M$, a {\em connection} or a {\em covariant derivative} with respect to $(\oa, d)$ is a $\KK$-linear map $\nabla: M\to \oan 1\ot_A M$ satisfying the Leibniz rule, for all $m\in M$ and $a\in A$,
\begin{equation}\label{Leibniz}
\nabla (am) = a \nabla(m) + da \ot_A m;
\end{equation} 
see e.g.\ \cite{Con:non}. The map $\nabla$ can be extended to the map $\nabla: \oan n \ot_A M\to \oan {n+1} \ot_A M$ by the graded version of the Leibniz rule \eqref{Leibniz}. $\nabla$ is said to be {\em flat} provided (the left $A$-module map) $\nabla\circ\nabla : M\to \oan 2 \ot_A M$ is identically zero. 

In the case of the universal differential algebra over $A$ one can use the standard identification $A\ot A\ot_A M\cong A\ot M$ to view a connection in $M$ as a map
$$
\nabla: M\to A\ot M, \qquad \nabla(m) =  \sum m\su A\ot m\su M.
$$
The latter expression is a notation which proves useful in explicit calculations. The indices are meant to remind the reader that the first leg is in $A$ and the second in $M$, and the summation sign indicates that we are not dealing with simple tensors but finite sums. Exploring the definition of $(\oau, d)$  and using the above notation one finds that $\nabla$ is a connection provided
\begin{subequations}\label{con}
\begin{gather}
\sum m \su A m\su M = 0, \label{con1}\\
\sum (am)\su A \ot (am)\su M = \sum am\su A\ot m\su M + 1\ot am - a\ot m, \label{con2}
\end{gather}
\end{subequations}
for all $a\in A$, $m\in M$. The first equation encodes the fact that the image of $\nabla$ is in $\oaun 1 \ot_A M$ which can be identified with the kernel of the action $A\ot M\to M$. The second equation is the Leibniz rule. A connection $\nabla$ is flat provided
\begin{equation} \label{flat}
 \sum 1\ot m\su A\ot m\su M = \sum m\su A \ot  m\su M\su A \ot m\su M \su M := \sum m\su A \ot \nabla (m\su M).
\end{equation}

\begin{lemma}\label{lem.act}
If a left $A$-module $M$ has a flat connection $\nabla$ with respect to $(\oau, d)$, then $M$ is an $A$-bimodule with the right action defined through the left action by
\begin{equation}\label{act}
ma
 := am - \sum m\su A a m\su M,
\end{equation}
for all $a\in A$ and $m\in M$.
\end{lemma}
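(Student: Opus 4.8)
The plan is to verify by direct computation that the formula \eqref{act} turns $M$ into a right $A$-module commuting with the given left action. Retaining the notation $\nabla(m)=\sum m\su A\ot m\su M$ of \eqref{flat} (and writing the given left action by juxtaposition, without a symbol), the right-hand side of \eqref{act} is evidently $\KK$-linear in $a$, and $\KK$-linear in $m$ because $\nabla$ is; moreover $m1=1m-\sum m\su A m\su M=m$ by \eqref{con1}. So only two things have to be checked: the compatibility $(am)b=a(mb)$ of the two actions, and the associativity $(ma)b=m(ab)$ of the new one. I would establish the compatibility first, since it is used in the proof of associativity — and it already reflects the introduction's remark that the flatness of $\nabla$ should be responsible solely for associativity, as only the Leibniz rule will be needed for compatibility.

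For $(am)b=a(mb)$, expand the left side by \eqref{act} as $b(am)-\sum\big((am)\su A b\big)(am)\su M$ and substitute the Leibniz identity \eqref{con2} for $\sum(am)\su A\ot(am)\su M$. The two correction terms $1\ot am$ and $-a\ot m$ in \eqref{con2} then contribute $bam$ and $-abm$; since $b(am)=bam$ these cancel the leading term, leaving $abm-\sum(am\su A b)m\su M$, which is nothing but $a\big(bm-\sum(m\su A b)m\su M\big)=a(mb)$.

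For $(ma)b=m(ab)$, I would use that (by $\KK$-linearity of $\nabla$) the right action is additive in its module slot, hence $(ma)b=(am)b-\sum\big((m\su A a)m\su M\big)b$. By the compatibility just proved, $(am)b=a(mb)$, and the same compatibility — applied now with the element $m\su A a\in A$ in the role of the acting scalar — gives $\big((m\su A a)m\su M\big)b=(m\su A a)(m\su M b)$; expanding $m\su M b$ by \eqref{act} applied to $m\su M$, collecting everything, and comparing with $m(ab)=abm-\sum(m\su A ab)m\su M$, the identity $(ma)b=m(ab)$ should reduce to
\begin{equation*}
\sum(am\su A b)\,m\su M=\sum(m\su A a\, m\su M\su A\, b)\,m\su M\su M,\qquad a,b\in A.
\end{equation*}
The point is to recognise this as the image of the flatness relation \eqref{flat} under the $\KK$-linear map $A\ot A\ot M\to M$, $u\ot v\ot w\mapsto(u\,a\,v\,b)\cdot w$: it sends $\sum 1\ot m\su A\ot m\su M$ to the left-hand side above and $\sum m\su A\ot m\su M\su A\ot m\su M\su M$ to the right-hand side, so \eqref{flat} forces the two to agree.

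I expect the main obstacle to be purely organisational: keeping the doubly-indexed $\nabla$-bookkeeping under control in the associativity expansion, and — the one genuinely non-mechanical step — spotting the evaluation map $u\ot v\ot w\mapsto(u\,a\,v\,b)\cdot w$ that converts the abstract identity \eqref{flat} into the concrete relation required. Everything else is routine manipulation with \eqref{con1} and \eqref{con2}.
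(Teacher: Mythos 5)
Your proof is correct and follows essentially the same route as the paper: unitality from \eqref{con1}, compatibility of the two actions from the Leibniz rule \eqref{con2}, and associativity of the right action from the flatness relation \eqref{flat} applied under exactly the evaluation $u\ot v\ot w\mapsto (uavb)w$ that the paper uses implicitly in its fourth equality. The only difference is organisational — you prove $(am)b=a(mb)$ first and reuse it to shorten the associativity expansion, whereas the paper expands $(ma)b$ directly — and this changes nothing of substance.
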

\begin{proof}
This is contained in \cite{AgoCae:cen} and can be revealed through the identification of modules with a flat connection with (left) comodules of the particular coring known as the Sweedler coring. However, as we do not assume that the reader is familiar with corings, we prove this lemma directly.

The unitality of the right $A$-action, i.e.\ that $m1=1$ follows by the unitality of the left $A$-action and equation \eqref{con1}. To prove the associativity, take any $a,b\in A$ and $m\in M$ and compute
\begin{equation*}
\begin{split}
(ma)b & = (am)b - \sum (m\su Aam\su M)b \\
&= bam - \sum(am)\su Ab(am)\su M -
\sum bm\su Aam\su M + \sum (m\su Aam\su M)\su Ab (m\su Aam\su M)\su M\\
&= -\sum am\su A bm \su M +abm + \sum m\su Aam\su M\su A bm\su M\su M - m\su Aabm\su M\\
&= abm +\sum am\su A b m\su M = m(ab),
\end{split}
\end{equation*}
where the third equation follows by \eqref{con2} and the fourth one by \eqref{flat}. Finally, in order to check the compatibility between left and right $A$-actions, take any $a,b\in A$ and $m\in M$, and, using the Leibniz rule \eqref{con1}, compute
\begin{equation*}
\begin{split}
(am)b &= bam - \sum (am)\su A b(am)\su M \\
&= bam - \sum am\su Abm\su M - bam +abm = a(mb),
\end{split}
\end{equation*}
as required.
\end{proof}

In view of Lemma~\ref{lem.act} one can take tensor product $M\ot_AN$ of any two left $A$-modules with flat connections. 

\begin{lemma}\label{lem.braid}
Given two left $A$-modules $M$, $N$ with flat connections  with respect to $(\oau, d)$, the following map
\begin{equation}\label{braid}
c_{M,N} : M\ot_A N \to N\ot_A M , \qquad m\ot_A n \mapsto n\ot_A m - \sum m\su A n \ot_A m\su M,
\end{equation}
is an isomorphism of $A$-bimodules. 
\end{lemma}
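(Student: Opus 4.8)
The plan is to verify directly that the formula \eqref{braid} defines a well-defined $A$-bimodule map which is bijective, by exhibiting an explicit inverse. First I would check well-definedness over the balanced tensor product: since $M\ot_A N$ is a quotient, I must show that the assignment $(m,n)\mapsto n\ot_A m - \sum m\su A n\ot_A m\su M$ respects the relation $(ma)\ot n = m\ot (an)$ for $a\in A$. Expanding the right action via \eqref{act} and using the Leibniz rule \eqref{con2} for the connection on $N$ together with \eqref{con1}, the two sides should collapse to the same element; this is the kind of bookkeeping that is routine but must be done carefully, tracking which leg lands in $A$. At the same time I would note that $c_{M,N}(m\ot_A n)$ indeed lies in $N\ot_A M$ and not merely in $N\ot M$, which again follows from \eqref{con1}.

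Next I would check that $c_{M,N}$ is left $A$-linear and right $A$-linear. Left linearity: compute $c_{M,N}(a(m\ot_A n)) = c_{M,N}((am)\ot_A n)$ and use the Leibniz rule \eqref{con2} for $\nabla$ on $M$ to produce $a\cdot c_{M,N}(m\ot_A n)$, with the extra terms $1\ot am - a\ot m$ cancelling against $an\ot_A m$ contributions once one remembers that in $N\ot_A M$ one has $an\ot_A m$ rewritten appropriately; here one uses the definition of the right action on $N$ and on $M$. Right linearity is the analogous computation using the right $A$-action on the last tensorand on both sides, invoking \eqref{act} and, crucially, \eqref{flat} to reassociate the doubly-indexed terms $m\su A{}\su A$. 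The flatness condition \eqref{flat} is exactly what makes the "square" of the braiding behave, so I expect it to reappear here.

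Finally, and this is where the real content lies, I would produce the inverse map. The natural candidate is the map of the same shape built from the connection on $N$,
\[
\cocan_{M,N}\colon N\ot_A M\to M\ot_A N,\qquad n\ot_A m\mapsto m\ot_A n - \sum n\su A m\ot_A n\su M,
\]
possibly with a sign or a reordering, and one checks $\cocan_{M,N}\circ c_{M,N} = \id$ and $c_{M,N}\circ\cocan_{M,N}=\id$ by direct substitution. In both composites the cross terms of the form $\sum m\su A n\su A\ot_A n\su M\ot_A m\su M$ must cancel, and this cancellation will again be governed by flatness \eqref{flat} applied to $M$ and to $N$ respectively, exactly as in the associativity computation in the proof of Lemma~\ref{lem.act}. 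The one genuine subtlety — the main obstacle — is getting the inverse formula precisely right: the asymmetry between $M$ and $N$ in \eqref{braid} means the inverse is not literally $c_{N,M}$, so I would first compute $c_{N,M}\circ c_{M,N}$, see what obstruction term survives, and then correct the guess accordingly (the Sweedler-coring picture of \cite{AgoCae:cen}, which identifies this with the canonical symmetry on comodules, guarantees such an inverse exists and tells us its shape). Once the inverse is pinned down the remaining verifications are mechanical applications of \eqref{con1}, \eqref{con2} and \eqref{flat}.
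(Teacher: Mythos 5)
Your proposal follows the paper's proof essentially step for step: lift the formula to $M\ot N$, verify balancedness and left/right $A$-linearity using \eqref{con1}, \eqref{con2} and \eqref{flat}, then exhibit an explicit inverse and check the composite. The one thing you predict incorrectly is the ``subtlety'' at the end: the inverse \emph{is} literally $c_{N,M}$ --- indeed the candidate $\cocan_{M,N}$ you write down is, after correcting the typo $n\su M\to n\su N$, exactly the formula for $c_{N,M}$ --- and the identity $c_{N,M}\circ c_{M,N}=\id$ falls out of \eqref{con1}, \eqref{con2} and the definition of the right action \eqref{act}, with no surviving obstruction term and no appeal to \eqref{flat} at that stage (flatness has already done its work in making \eqref{act} associative and $c_{M,N}$ right $A$-linear). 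Since your stated fallback is precisely to compute $c_{N,M}\circ c_{M,N}$ and adjust if something survives, you would discover this at once, so the plan goes through unchanged.
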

\begin{proof}
This again can be read off \cite{AgoCae:cen} and again we will give an explicit proof. As a priori it is not clear that the map $c_{M,N}$ is well-defined it appears prudent  to consider first the lifting
$$
\tilde{c}_{M,N} : M\ot N \to N\ot M , \qquad m\ot n \mapsto n\ot m - \sum m\su A n \ot m\su M.
$$
The left $A$-linearity of $\tilde{c}_{M,N}$ is proven as follows, for all $a\in A$ and $m\in M$,
\begin{equation*}
\begin{split}
\tilde{c}_{M,N}(am\ot n) &= n\ot am - \sum (am)\su A n \ot (am)\su M \\
&= n\ot am - \sum  am\su An\ot m\su M - n\ot am + an\ot m = a\tilde{c}_{M,N}(m\ot n),
\end{split}
\end{equation*}
where the second equality follows by the Leibniz rule \eqref{con2}. Using the definition of the right $A$-action  on $M$ \eqref{act} and the left $A$-linearity of $\tilde{c}_{M,N}$ (to derive the second equality) one can compute
\begin{equation*}
\begin{split}
\tilde{c}_{M,N}(ma\ot n) &= \tilde{c}_{M,N}(am\ot n) -\tilde{c}_{M,N}( \sum m\su Aa m\su M \ot n) \\
&= an\ot m - \sum am\su An\ot m\su M -   \sum m\su Aan\ot m\su M \\
& ~\hspace{.7in} + \sum m\su Aam\su M\su An\ot m\su M\su M\\
&= an\ot m - \sum am\su An\ot m\su M  - \sum m\su Aan\ot m\su M\\
& ~\hspace{.7in}+ \sum am\su An\ot m\su M \\
& = \tilde{c}_{M,N}(m\ot an) .
\end{split}
\end{equation*}
The penultimate equality follows by \eqref{flat}. These properties of $\tilde{c}_{M,N}$  imply that the map $\tilde{c}_{M,N}$ gives rise to the  left $A$-linear map $c_{M,N}$ as described in \eqref{braid}. 
Next, using the definition of right $A$-action on $M$, the flatness condition \eqref{flat} and \eqref{braid} one can compute
\begin{equation*}
\begin{split}
c_{M,N}(m\ot _A n) a &= n\ot_A ma -\sum m\su A n\ot_A m\su Ma\\
&= n\ot_A a m - \sum n\ot_A m\su Aam\su M - \sum m\su An\ot_A am\su M\\
& ~\hspace{.8in}+\sum m\su An\ot_A m\su M\su Aam\su M \su M\\
&= na \ot_A m - \sum n\ot_A m\su Aam\su M - \sum m\su Ana\ot_A m\su M\\
& ~\hspace{.8in}+\sum n\ot_A m\su Aam\su M\\
&= c_{M,N}(m\ot_A na).
\end{split}
\end{equation*}
This means that $c_{M,N}$  is a right $A$-linear map as required. Finally, $c_{N,M}$ is the inverse of $c_{M,N}$ as
\begin{equation*}
\begin{split}
c_{N,M} & \circ c_{M,N}(m\ot_A n) = m\ot_A n - \sum n\su A m\ot_A n \su N - \sum m\su M \ot_A m\su An \\
& ~\hspace{.75in} + \sum (m\su A n)\su A m\su M \ot_A (m\su A n)\su N\\
&=  m\ot_A n - \sum n\su A m\ot_A n \su N - \sum m\su M \ot_Am\su A n 
 + \sum m\su A n\su A m\su M \ot_A n\su N\\
& ~\hspace{.75in} + \sum m\su M \ot_Am\su A n 
- \sum m\su A m\su M\ot_A n \\
&=  m\ot_A n - \sum  m n\su A\ot_A n \su N = m\ot_A n,
\end{split}
\end{equation*}
where the Leibniz rule \eqref{con1} is used in derivation of the second equality. The remaining equalities follow by properties \eqref{con2} (for both connections on $M$ and $N$) and the definition of the right $A$-action on $N$ (remember that the tensor product is over $A$, and terms marked $m\su A$, $n\su A$ are elements of $A$).
\end{proof}

In fact the results of \cite{AgoCae:cen} assert that the maps $c_{M,N}$ define a braiding (more precisely,  symmetry, since $c_{M,N}\circ c_{N,M} = \id$) for the category of flat connections (with respect to the universal differential algebra). It is worth to keep this in mind, however we make no use of this fact in the present note.

The following definition combines notions introduced in \cite{Mou:lin},\cite{DubMad:lin} (bimodule connections) and in \cite{BegBrz:Ser} (flat differentiable bimodules).

\begin{definition}\label{def.main}
Let $M$ be an $A$-bimodule and fix a differential graded algebra $(\oa ,d)$ over $A$. Denote the product  $\oan 1\ot_A \oan 1 \to \oan 2$ by $\mu_{\oa}$.

A connection $\nabla : M\to \oan 1\ot_A M$ on (the left $A$-module) $M$ is called a {\em bimodule connection} if there exists $\sigma^1 : M\ot_A \oan 1\to \oan 1 \ot_A M$ such that
\begin{equation}\label{bim.con}
\nabla (ma) = \nabla(m) a + \sigma^1(m\ot_A da).
\end{equation}

$M$ together with a flat bimodule connection $(\nabla, \sigma^1)$ is called a {\em flat differentiable bimodule} if there exists a map $\sigma^2 : M\ot_A \oan 2\to \oan 2 \ot_A M$ extending $\sigma^1$ so that the following equality
\begin{equation}\label{flat.bim}
(\mu_{\oa} \ot_A)\circ (\id \ot_A \sigma^1)\circ (\sigma^1 \ot_A \id) = \sigma^2 \circ (\id \ot_A \mu_{\oa})
\end{equation}
holds on $M\ot_A \oan 1\ot_A \oan 1$.
\end{definition}

It is worth pointing out \cite{DubMas:fir} that if $M$ is an $A$-bimodule and $\nabla$ is a connection (in the left $A$-module $M$) with respect to the universal differential algebra over $A$, then it is automatically a bimodule connection. Since the space $  M \ot_A\oaun 1$ can be identified with the kernel of the right $A$-action, i.e.,
$$
M \ot_A\oaun 1 \cong \{\sum_i m_i\ot a_i \in M\ot A \; |\; \sum_i m_ia_i =0\},
$$
there is always a well-defined map
\begin{equation}\label{sigu}
\sigma^ 1 : M\ot _A\oaun 1 \to \oaun 1 \ot_A M , \qquad \sum_i m_i\ot a_i \mapsto - \sum_i \nabla(m_i)a_i,
\end{equation}
known as the {\em right universal sumbol} of $\nabla$ \cite{DubMas:fir}. Evidently, $\sigma^1$ is a right $A$-module map. Its left $A$-linearity follows by the Leibniz rule \eqref{con2} coupled with the defining property $ \sum_i m_ia_i =0$. Since $da = 1\ot a -a\ot 1$, one easily checks that the condition \eqref{bim.con} is satisfied. Furthermore,  the multiplication in the universal differential algebra over $A$ is provided by the tensor product and $\oaun 2 = \oaun 1\ot_A \oaun 1$, hence the equality \eqref{flat.bim} takes in this case simpler form:
\begin{equation}\label{flat.bim.uni}
(\id \ot_A \sigma^1)\circ (\sigma^1 \ot_A \id) = \sigma^2 .
\end{equation}
Since $\sigma^1$ is a bimodule map, this need not to be treated as a condition for $\sigma^2$ but rather as the definition of $\sigma^2$. Therefore, if $\nabla$ is a flat connection with respect to $\oau$, then $M$ is a flat differentiable bimodule. In view of this, the following theorem is not so much a statement about the existence but rather about the particular form of the maps $\sigma^1$ and $\sigma^2$. 

\begin{theorem}\label{thm.main}
Let $M$ be a left $A$-module with a flat connection $\nabla$ with respect to the universal differential algebra over $A$. Then $M$ is a flat differentiable bimodule with right $A$-action \eqref{act}, $\sigma^1 = c_{M,\oaun 1}$ and $\sigma^2 = c_{M, \oaun 2}$, where flat connections on the $\oaun n$ (needed for the defintions \eqref{braid}) are provided by $d$. 
\end{theorem}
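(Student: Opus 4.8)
The plan is to reduce the statement to the two explicit identities $\sigma^1 = c_{M,\oaun 1}$ and $\sigma^2 = c_{M,\oaun 2}$ and then verify each by a direct computation. Indeed, by the discussion immediately preceding the theorem (which goes back to \cite{DubMas:fir}), once $M$ is given the right $A$-action \eqref{act} from Lemma~\ref{lem.act}, the connection $\nabla$ is automatically a bimodule connection whose right universal symbol is the map $\sigma^1$ of \eqref{sigu}; and because $\oaun 2 = \oaun 1 \ot_A \oaun 1$, setting $\sigma^2 := (\id \ot_A \sigma^1)\circ(\sigma^1 \ot_A \id)$ makes \eqref{flat.bim.uni} hold tautologically, so $M$ is already a flat differentiable bimodule. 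Thus only the shape of $\sigma^1$ and $\sigma^2$ is at stake.

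First I would confirm that $d$ really does furnish a flat connection on each $\oaun n$ in the sense of Section~\ref{sec.alg}, so that Lemmas~\ref{lem.act} and~\ref{lem.braid} apply to $N = \oaun n$ and $c_{M,\oaun n}$ is a well-defined $A$-bimodule isomorphism. Viewing $\nabla = d : \oaun n \to \oaun 1 \ot_A \oaun n = \oaun{n+1}$ and writing forms as sums $\sum a_0\, da_1\cdots da_n$, the Leibniz rule \eqref{Leibniz} is precisely the graded Leibniz rule for $d$ (the product $\oaun 1 \ot_A \oaun n \to \oaun{n+1}$ being the canonical identification), and flatness $\nabla\circ\nabla = 0$ is just $d\circ d = 0$.

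For the identity $\sigma^1 = c_{M,\oaun 1}$ I would exploit that $\oaun 1 = A\,dA$, so that $M \ot_A \oaun 1$ is spanned by the elements $m \ot_A da$ and $\sigma^1$ is the unique $\KK$-linear map satisfying \eqref{bim.con}; it is therefore enough to check that $c_{M,\oaun 1}$ obeys \eqref{bim.con}, i.e.\ that $\nabla(ma) = \nabla(m)\,a + c_{M,\oaun 1}(m \ot_A da)$. One expands $c_{M,\oaun 1}(m \ot_A da) = da \ot_A m - \sum m\su A\, da \ot_A m\su M$ from \eqref{braid}, expands $\nabla(ma)$ using the definition \eqref{act} of the right action together with the Leibniz rule \eqref{con2}, and expands $\nabla(m)\,a$ using \eqref{act} and the flatness condition \eqref{flat}; both sides then collapse to the same element, the flatness \eqref{flat} being exactly what is needed to dispose of the cross terms. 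This is the same style of manipulation as in the proofs of Lemmas~\ref{lem.act} and~\ref{lem.braid}.

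For $\sigma^2 = c_{M,\oaun 2}$ I would establish the hexagon-type identity
\[
 c_{M,\,\oaun 1\ot_A\oaun 1}\;=\;(\id_{\oaun 1}\ot_A c_{M,\oaun 1})\circ(c_{M,\oaun 1}\ot_A\id_{\oaun 1})
\]
as maps $M\ot_A\oaun 1\ot_A\oaun 1 \to \oaun 1\ot_A\oaun 1\ot_A M$. Evaluating both sides on a general element $m\ot_A x\ot_A y$ via \eqref{braid}, the only non-trivial point is that the term $\sum m\su A x \ot_A (m\su M)\su A\, y \ot_A (m\su M)\su M$ produced by the composite equals, by the flatness identity \eqref{flat} applied on $M$ together with the $A$-balancing of the tensor factors, the term $\sum x \ot_A m\su A y \ot_A m\su M$, and these cancel leaving exactly $c_{M,\,\oaun 1\ot_A\oaun 1}(m\ot_A x\ot_A y)$. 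Since $\oaun 2 = \oaun 1\ot_A\oaun 1$ and $\sigma^2 = (\id\ot_A\sigma^1)\circ(\sigma^1\ot_A\id)$ by \eqref{flat.bim.uni}, combining this with $\sigma^1 = c_{M,\oaun 1}$ gives $\sigma^2 = c_{M,\oaun 2}$. The main obstacle throughout is not conceptual but a matter of careful bookkeeping: keeping straight the identifications of $M\ot_A\oaun 1$, $\oaun 1\ot_A M$ (and their degree-$2$ analogues) with subspaces of $M\ot A$, $A\ot M$, the $A$-balancing moves used to shuttle scalars across $\ot_A$, and the fact that the bimodule structure carried by each $\oaun n$ in the statement is the one induced from $d$ via Lemma~\ref{lem.act}; each of the individual checks runs parallel to a computation already performed above.
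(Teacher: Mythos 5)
Your proposal is correct and follows essentially the same route as the paper: the two computations you outline --- checking that $c_{M,\oaun 1}$ satisfies \eqref{bim.con} by expanding both sides via \eqref{con2}, \eqref{act} and \eqref{flat}, and verifying $(\id\ot_A c_{M,\oaun 1})\circ(c_{M,\oaun 1}\ot_A\id)=c_{M,\oaun 2}$ using \eqref{flat} --- are precisely the two displayed calculations in the paper's proof. The only (harmless) difference is organizational: you invoke the uniqueness of the right universal symbol on $M\ot_A\oaun 1$ to conclude $\sigma^1=c_{M,\oaun 1}$, whereas the paper simply exhibits $c_{M,\oaun 1}$ and $c_{M,\oaun 2}$ as maps satisfying \eqref{bim.con} and \eqref{flat.bim.uni}.
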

\begin{proof}
By Lemma~\ref{lem.act}, $M$ is an $A$-bimodule and maps $\sigma^1$, $\sigma^2$ are well-defined bimodule (iso)morphisms by Lemma~\ref{lem.braid}. We start by checking that $\nabla$ is a bimodule connection on $M$. Take any $a\in A$ and $m\in M$ and compute:
\begin{equation*}
\begin{split}
\nabla(ma) &= \nabla( am) - \nabla (\sum m\su Aam\su M)\\
&= \sum am\su A\ot m\su M + 1\ot am -a\ot m -\sum m\su Aa m\su M\su A\ot m\su M\su M\\
& ~\hspace{1.4in} - \sum 1\ot m\su Aam\su M +\sum m\su Aa \ot m\su M\\
&= 1\ot am -a\ot m - \sum 1\ot m\su Aam\su M +\sum m\su Aa \ot m\su M.
\end{split}
\end{equation*}
The first equality is the consequence of the definition of right $A$-action on $M$, second follows by the Leibniz rule \eqref{con1} and the third one by the flatness of $\nabla$ \eqref{flat}. On the other hand,
\begin{equation*}
\begin{split}
\nabla(m)a &+ c_{M,\oaun 1}(m\ot_A da) = \sum m\su A\ot m\su Ma +da\ot_A m - \sum m\su A da \ot _A m\su M\\
&= \sum m\su A\ot am\su M - \sum m\su A\ot  m\su M\su A am\su M\su M +1\ot am\\
&~\hspace{1.4in} -a\ot m- \sum m\su A\ot am\su M + \sum m\su Aa\ot m\su M\\
&= - \sum 1\ot m\su Aam\su M + 1\ot am
-a\ot m  + \sum m\su Aa\ot m\su M
= \nabla (ma).
\end{split}
\end{equation*}
The first equality is simply the defintiion of the map $c_{M,\oaun 1}$ \eqref{braid}, while the second follows by the definitions of the right $A$-action \eqref{act} and the universal differential $d$, and by the standard identification $A\ot A\ot_A M\cong A\ot M$. The penultimate equality follows by \eqref{flat}. Therefore, $\nabla$ is a left bimodule connection, as stated.

Equation \eqref{flat.bim.uni} is checked by taking any $m\in M$ and $\omega, \tom \in \oaun 1$ and computing (tensor product over $A$)
\begin{equation*}
\begin{split}
(\id \ot \sigma^1)&\circ (\sigma^1 \ot \id)(m\ot \omega\ot \tom) = (\id \ot \sigma^1)(\omega \ot m\ot\tom - \sum m\su A\omega \ot m\su M\ot \tom)\\
&= \omega \ot \tom \ot m -  \sum \omega\ot m\su A \tom \ot m\su M - \sum m\su A\omega\ot \tom \ot m\su M\\
&~\hspace{1.4in} + \sum m\su A\omega\ot m\su M\su A\tom \ot m\su M\su M\\
&= \omega \ot \tom \ot m - \sum m\su A\omega\ot \tom \ot m\su M = c_{M,\oaun 2}(m\ot \omega\ot \tom). 
\end{split}
\end{equation*}
The first two equalities follow by the definition of $\sigma^1 = c_{M,\oaun 1}$, while the second equality is a consequence of the flatness of the connection \eqref{flat}. The final equality is the definition of $c_{M,\oaun 2}$. Since $\sigma^2 = c_{M,\oaun 2}$, the compatibility condition \eqref{flat.bim.uni} is satisfied and the theorem is proven.
\end{proof}

By \cite[Propositions~2.12~\&~2.15]{BegBrz:Ser}, a flat differentiable bimodule induces an endofunctor on the category of modules with (flat) connections: If $(M,\nabla, \sigma^1,\sigma^2)$ is a flat differentiable bimodule and $(N,\nabla_N)$ is a module with connection, then there is a covariant derivative on $M\ot_A N$,
$$
\nabla_{M\ot_A N} = \nabla \ot_A\id + (\sigma^1\ot_A\id)\circ (\id\ot_A \nabla_N).
$$
Furthermore, $\nabla_{M\ot_A N}$ is flat, provided $\nabla_N$ is flat. In view of Theorem~\ref{thm.main}, in the case of the universal differential algebra over $A$, given any flat connection $\nabla$ on a left $A$-module $M$ and a left connection $\nabla_N: N\to A\ot N$, $n\mapsto \sum n\su A\ot n\su N$, the map $\nabla_{M\ot_A N}: M\ot_A N\to A\ot M\ot_AN$, given by
\begin{equation*}
\begin{split}
\nabla_{M\ot_A N}(m\ot_A n) &= \sum m\su A\ot m\su M \ot_A n +\! \sum n\su A\ot m\ot_A n\su N -\! \sum 1\ot n\su Am\ot_A n\su N \\
&+ \sum m\su A\ot n\su A m\su M\ot_A n\su N -\sum m\su An\su A\ot m\su M\ot_A n\su N,
\end{split}
\end{equation*}
is a connection on $M\ot_A N$ that is flat provided $\nabla_N$ is flat. Here, the left action of $A$ on $M\ot_A N$ is $a(m\ot_A n) := am\ot_A n$.

\begin{example}\label{ex.1} Take $M=A\ot A$ and view it as a left $A$-module by the `outer' action $a(b\ot c) = ab\ot c$. The right $A$-action induced by the flat connection
$$
\nabla (a\ot b) = 1\ot a\ot b - a\ot 1\ot b + a\ot b\ot 1 - ab\ot 1\ot 1,
$$
comes out as
\begin{equation}\label{act.univ}
(a\ot b) c = ac\ot b+a[b,c]\ot 1,
\end{equation}
where $[b,c] = bc -cb$ denotes the commutator. One can easily check that the map
\begin{equation}\label{delta}
\Delta: A\ot A\to A\ot A\ot_A A\ot A, \qquad a\ot b \mapsto a\ot 1\ot_A 1\ot b,
\end{equation}
is $A$-bilinear and satisfies the associative law $(\Delta\ot_A \id)\circ \Delta = (\id\ot_A \Delta)\circ\Delta$. If $e\in A$ is an idempotent element, i.e.\ $e^2 =e$, then
$$
\Delta(e\ot 1) = e\ot 1\ot_A e\ot 1.
$$
As explained in \cite[29.3]{BrzWis:cor}, one can associate with $M$ and $e\ot 1$ a differential graded algebra $\oa$ over $A$, where  $\oa$ is the tensor algebra of the $A$-bimodule $M$ and the differential $d: \oan n \to \oan {n+1}$ is
\begin{equation}
\begin{split}
 d(m_{1}\ut_A \cdots \ut_A m_{n}) &= 
	(e\ot 1)\ot_A m_{1}\ot_A \cdots \ot_A m_{n}\! +\! (-1)^{n+1}m_{1}\ot_A \cdots \ot_A m_{n}
	\ot_A(e\ot 1) \notag \\
	&+ \sum_{i=1}^{n}(-1)^{i}m_{1}\ot_A \cdots \ot_A 
	m_{i-1}\ot_A \Delta(m_{i})\ot_A m_{i+1} 
	\ot_A \cdots\ot_A m_{n},     
    \end{split}
    \end{equation}
for all $m_1,\ldots ,m_n\in A\ot A$. Taking into account the natural identification $(A\ot A)\ot_A (A\ot A) \cong A\ot A\ot A$, one concludes that $\oan n \cong A^{\otimes n+1}$.  The forms of the action \eqref{act.univ} and the map $\Delta$ \eqref{delta} yield the following explicit description of  $\oa$. First, for any $a_0,\ldots, a_n, b\in A$, write $\psi(a_0,\ldots, a_n; b) \in A^{\otimes n+1}$ for the following sum (of $2^n$ terms)
\begin{equation*}
\begin{split}
\psi(& a_0,\ldots, a_n; b) := a_0b\ot a_1 \ot \ldots \ot  a_n \\
&+
\sum_{i=1}^n\sum_{1\leq k_1 <k_2 <\ldots < k_i \leq n} \!\!\!\!a_0[a_{k_1}, [a_{k_2}, \ldots  [a_{k_i}, b] \ldots ]]\ot a_1\ot \ldots \ot \hat{a}_{k_1} \ot \ldots \ot \hat{a}_{k_i}\ot \ldots \ot a_n,
\end{split}
\end{equation*}
where $\hat{a}_l$ means that $a_l$ should be replaced by $1$ (in the $l+1$-st position). For example,
$$
\psi(a_0,a_1, a_2; b) = a_0b\ot a_1 \ot a_2 + a_0[a_1,b]\ot 1\ot a_2 + a_0[a_2,b] \ot a_1\ot 1 + a_0[a_1,[a_2,b]]\ot 1\ot 1.
$$
With this notation at hand, the product in $\oa$ is given by
$$
(a_0\ot \ldots \ot a_n)(b_0\ot \ldots \ot b_m) = \psi(a_0,\ldots, a_n; b_0)\ot b_1\ot  \ldots \ot b_m,
$$
and the differential $d: A^{\otimes n}\to A^{\otimes n+1}$ comes out as
\begin{equation*}
\begin{split}
d(a_0 \ot \ldots \ot a_n) &= ea_0 \ot 1\ot a_1\ot \ldots \ot a_n + (-1)^{n+1}  \psi(a_0,\ldots, a_n; e)\ot 1 \\
& + \sum_{i=1}^n (-1)^i a_0\ot \ldots \ot a_{i-1}\ot 1\ot a_i \ot \ldots \ot a_n.
\end{split}
\end{equation*}
\end{example}
\begin{example}
The connection on the left $A$-module $M=A\ot A$ (with the left $A$-action as in Example~\ref{ex.1})
$$
\nabla = d\ot \id, \qquad a\ot b\mapsto 1\ot a \ot b - a\ot 1\ot b,
$$
induces the `inner' right $A$-action, 
$
(a\ot b) c = ac\ot b.
$
Rather disappointingly,  the natural `outer' right $A$-action $(a\ot b)c = a\ot bc$ on $A\ot A$, does not seem to be a manifestation of a flat connection on the left $A$-module $A\ot A$ with the `outer' left $A$-action.
\end{example}

\section{Every module admitting flat hom-connection with respect to the universal differential structure is a bimodule}
\setcounter{equation}{0}
In addition to standard connections one can also consider connections of the second kind or {\em hom-connections} introduced in \cite{Brz:con}. A right {\em hom-connection} with respect to  a differential graded algebra $(\oa, d)$ over an algebra $A$, is a pair $(M,\nabla)$, where $M$ is a right $A$-module and 
$$
\nabla :\rhom A {\oan 1} M \to M,
$$
is a $\KK$-linear map, such that, for all $f\in \rhom A {\oan 1} M$ and $a\in A$,
\begin{equation}\label{hom-con}
\nabla (fa) = \nabla (f)a + f(da).
\end{equation}
Here $\rhom A {\oan 1} M $ denotes the space of all right $A$-linear maps and  is given a right $A$-module structure by $(fa)(\omega) := f(a\omega)$, $\omega \in \oan 1$, $a\in A$.

Any hom-connection $(M,\nabla )$ can be extended to higher forms. The vector space $\bigoplus_{n=0} \rhom A {\oan n} M$ is a right module over $\oa$ with the multiplication, for all $\omega\in \oan n$, $f\in \rhom A{\oan{n+m}} M$, $\omega'\in \oan m$,
\begin{equation}\label{act.n}
f\omega (\omega') := f(\omega\omega').
\end{equation}
For any $n>0$, define 
$
\nabla_n: \rhom A {\oan {n+1}}M \to \rhom A {\oan {n}}M,
$
by
\begin{equation}\label{nablan}
\nabla_n(f)(\omega) :=  \nabla  (f\omega) + (-1)^{n+1} f(d\omega),
\end{equation}
for all $f\in \rhom A {\oan{n+1}}M$ and $\omega \in \oan{n}$. 
The map $F := \nabla \circ \nabla_1$ is a right $A$-module homomorphism which is called the {\em curvature} of  $(M,\nabla )$, and  $(M,\nabla )$ is said to be {\em flat} provided $F=0$.

\begin{proposition}\label{prop.hom}
Let $(M,\nabla)$ be a flat right hom-connection with respect to the universal differential algebra $\oau$ over $A$. Then $M$ is an $A$-bimodule with the left action, for all $a\in A$, $m\in M$
\begin{equation}\label{l.act}
am := ma + \nabla(\varphi_{m,a}),
\end{equation}
where the right $A$-linear maps $\varphi_{m,a}: \oaun 1 \to M$ are defined by
\begin{equation}\label{phi}
\varphi_{m,a}: \sum_i a_i\ot b_i \mapsto \sum_i ma_iab_i.
\end{equation}
\end{proposition}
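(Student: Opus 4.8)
The plan is to mimic the structure of the proof of Lemma~\ref{lem.act}, exploiting the duality between the hom-connection situation and the connection situation. First I would unpack the flatness condition $F = \nabla\circ\nabla_1 = 0$ into an explicit identity. Writing $d: A\to \oaun 1$ for $da = 1\ot a - a\ot 1$ and $d:\oaun 1\to \oaun 2$ for $d(a\ot b) = 1\ot a\ot b - a\ot 1\ot b + a\ot b\ot 1$, the formula \eqref{nablan} for $n=1$ gives $\nabla_1(g)(a\ot b) = \nabla(g(a\ot b)) + g(d(a\ot b))$ for $g\in\rhom{A}{\oaun 2}{M}$, so $F=0$ translates into a relation among $\nabla$ applied to various right $A$-linear maps built from a fixed map $\oaun 2\to M$. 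The key computational input will be this flatness identity evaluated on the specific maps $\varphi_{m,a}$ of \eqref{phi}, together with the hom-connection Leibniz rule \eqref{hom-con} in the form $\nabla(fa) - \nabla(f)a = f(da)$.

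Next I would verify the three required properties of the proposed left action $am := ma + \nabla(\varphi_{m,a})$. Unitality ($1m = m$) should follow because $\varphi_{m,1}: \sum_i a_i\ot b_i\mapsto \sum_i ma_ib_i$ vanishes on $\oaun 1$ (since $\sum_i a_ib_i = 0$ there by the description of $\oaun 1$ as the kernel of $\mu$), hence $\varphi_{m,1} = 0$ and $1m = m1 = m$. For compatibility with the right action, i.e.\ $(am)b = a(mb)$, I would expand both sides using the definition, the fact that $\nabla$ is $\KK$-linear and right $A$-linear up to the Leibniz correction \eqref{hom-con}, and the identity $\varphi_{m,a}b = \varphi_{mb,a} - (\text{correction involving }da\text{-type terms})$ obtained by tracking $(\varphi_{m,a}b)(\sum_i a_i\ot b_i) = \varphi_{m,a}(\sum_i ba_i\ot b_i) = \sum_i ma_i a b_i b$ against $\varphi_{mb,a}$; the Leibniz rule \eqref{hom-con} absorbs the discrepancy. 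The crucial and hardest step is associativity $(ab)m = a(bm)$: here one must expand $a(bm) = a(mb + \nabla(\varphi_{m,b})) = (mb)a + \nabla(\varphi_{mb,a}) + \nabla(\varphi_{\nabla(\varphi_{m,b}),a})$ and $(ab)m = m(ab) + \nabla(\varphi_{m,ab})$, and show the difference vanishes. This is where flatness $F=0$ enters decisively: the term $\nabla(\varphi_{\nabla(\varphi_{m,b}),a})$ is precisely the kind of ``$\nabla$ of a $\nabla$'' expression that the curvature controls, and one identifies the relevant right $A$-linear map $\oaun 2\to M$ so that $F=0$ collapses the nested $\nabla$'s, leaving exactly the terms needed to match $m(ab) - (mb)a$ via the already-established compatibility.

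The main obstacle I anticipate is bookkeeping: correctly identifying, for fixed $m,a,b$, the element of $\rhom{A}{\oaun 2}{M}$ whose image under $F = \nabla\circ\nabla_1$ produces the cross-term $\nabla(\varphi_{\nabla(\varphi_{m,b}),a})$, and checking that the remaining pieces of $\nabla_1$ applied to that element (the $f(d\omega)$ part of \eqref{nablan}) reassemble into $\varphi_{m,ab}$, $\varphi_{mb,a}$ and the $da$-, $db$-Leibniz corrections in the right combination. Once the correct auxiliary map is pinned down, the verification is a finite manipulation using only \eqref{hom-con}, \eqref{act.n}, \eqref{nablan} with $n=1$, and the explicit form of $d$ on $\oaun 0$ and $\oaun 1$; I would organize it exactly parallel to the computation $(ma)b = m(ab)$ in Lemma~\ref{lem.act}, since the whole Proposition is the dual statement obtained by replacing modules-with-connection over the Sweedler coring by their $\Hom$-duals. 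I would close by remarking (as the paper does for connections) that this is the manifestation, under the comodule correspondence of \cite{Brz:con}, of the symmetry of an appropriate category, though no use is made of that here.
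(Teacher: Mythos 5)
Your plan follows essentially the same route as the paper's proof: unitality from $\varphi_{m,1}=0$, compatibility of the two actions from the Leibniz rule \eqref{hom-con} applied via $\varphi_{m,a}b=\varphi_{mb,a}$ (this identity is in fact exact, with the correction terms entering only through $\varphi_{m,b}(da)=mba-mab$ in \eqref{hom-con}), and associativity from flatness $\nabla\circ\nabla_1=0$ evaluated on the auxiliary map you anticipate having to pin down, which is $\varphi_{m,a,b}:\oaun 2\to M$, $\sum_i a_i\ot b_i\ot c_i\mapsto\sum_i ma_iab_ibc_i$. The only slip is that your displayed expansion of $a(bm)$ omits the term $\nabla(\varphi_{m,b})a$ arising from $(bm)a=(mb)a+\nabla(\varphi_{m,b})a$; once that term is restored, the bookkeeping closes exactly as in the paper.
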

\begin{proof}
First note that, for all $a,b\in A$ and $m\in M$,
$$
\varphi_{m,a}b = \varphi_{mb,a} \qquad \mbox{and} \qquad \varphi_{m,a}(db) = mab - mba.
$$
Hence, equation \eqref{hom-con} for  $f =\varphi_{m,b}$ takes the following form
\begin{equation}\label{hom-con-phi-0}
\nabla(\varphi_{ma,b}) = \nabla(\varphi_{m,b})a +mba -mab.
\end{equation}
In particular, for all $\omega \in \oaun 1$,
\begin{equation}\label{hom-con-phi}
\nabla(\varphi_{\varphi_{m,a}(\omega),b}) = \varphi_{\nabla(\varphi_{m,b}) ,a}(\omega) +\varphi_{mb,a}(\omega) -\varphi_{m,a}(\omega)b.
\end{equation}
Next, identify $\oaun 2$ with the subspace of $A\ot A\ot A$ defined as the intersection of kernels of multiplications $\mu\ot \id$ and $\id\ot \mu$, and define right $A$-linear maps, for all $a,b,\in A$ and $m\in M$,
\begin{equation*}\label{phiphi}
\varphi_{m,a,b}: \oaun 2 \to M, \qquad \sum_i a_i\ot b_i\ot c_i \mapsto \sum_i ma_iab_ibc_i.
\end{equation*}
Acting with $\omega = \sum_ia_i\ot b_i \in \oaun 1$ on $\varphi_{m,a,b}$ as in \eqref{act.n} one finds
\begin{equation} \label{phi.act}
\varphi_{m,a,b} \ \omega =\varphi_{\sum_i ma_iab_i, b} = \varphi_{\varphi_{m,a}(\omega), b}.
\end{equation}
Furthermore, since $d(\sum_i a_i\ot b_i) = \sum_i (1\ot a_i\ot b_i - a_i\ot 1\ot b_i+ a_i\ot b_i\ot 1)$, for all $\omega \in \oaun 1$,
\begin{equation}\label{phi.d}
\varphi_{m,a,b} \left(d\omega\right) =  \varphi_{ma, b}(\omega) -\varphi_{m,ab}(\omega) + \varphi_{m,a}(\omega)b.
\end{equation}
Combining the definition of $\nabla_1$ \eqref{nablan} with equations  \eqref{phi.act}, \eqref{phi.d} and \eqref{hom-con-phi}, one computes, for all $\omega \in \oaun 1$,
\begin{equation*}
\begin{split}
\nabla_1 (\varphi_{m,a,b}) (\omega)  &= \nabla\left(\varphi_{m,a,b} \ \omega \right) + \varphi_{m,a,b} \left(d\omega\right)\\
&= \nabla\left(\varphi_{\varphi_{m,a}(\omega), b}\right) +  \varphi_{ma, b}(\omega) -\varphi_{m,ab}(\omega) + \varphi_{m,a}(\omega)b\\
&= \varphi_{\nabla(\varphi_{m,b}) ,a}(\omega) +\varphi_{mb,a}(\omega) -\varphi_{m,a}(\omega)b +  \varphi_{ma, b}(\omega) -\varphi_{m,ab}(\omega) + \varphi_{m,a}(\omega)b \\
&= \left(\varphi_{\nabla\left(\varphi_{m, b}\right), a} +   \varphi_{mb,a}  + \varphi_{ma,b} -\varphi_{m,ab} \right)\left(\omega \right)
\end{split}
\end{equation*}
Since $\nabla$ is flat, $\nabla_1\circ\nabla =0$, so
\begin{equation}\label{hom-flat}
\nabla \left(\varphi_{\nabla\left(\varphi_{m, b}\right), a}\right) =     \nabla(\varphi_{m,ab})- \nabla( \varphi_{mb,a})  -\nabla( \varphi_{ma,b}).
\end{equation}
With \eqref{hom-con-phi-0} and \eqref{hom-flat} at hand one can prove that $M$ is a bimodule with the left $A$-action \eqref{l.act} as follows. First $1 m=m$, since $\varphi_{m,1} =0$. Next, for all $a,b\in A$ and $m\in M$,
\begin{equation*}
\begin{split}
a(bm) &= mba +\nabla(\varphi_{mb,a})  +\nabla(\varphi_{m,b})a + \nabla \left(\varphi_{\nabla\left(\varphi_{m, b}\right), a}\right)\\
&= \nabla(\varphi_{mb,a})  +\nabla(\varphi_{ma,b}) + mab + \nabla \left(\varphi_{\nabla\left(\varphi_{m, b}\right), a}\right)\\
&= mab + \nabla(\varphi_{m,ab}) = (ab)m,
\end{split}
\end{equation*}
where the second equality follows by \eqref{hom-con-phi-0} and the third one by \eqref{hom-flat}. Finally,
\begin{equation*}
\begin{split}
(am)b &= mab +\nabla(\varphi_{m,a})b = \nabla(\varphi_{mb,a}) + mba = a(mb),
\end{split}
\end{equation*}
by \eqref{hom-con-phi-0}. Therefore, $M$ is an $A$-bimodule as stated.
\end{proof}

Thus, a module with a flat hom-connection $\nabla$ is automatically an $A$-bimodule. Consequently, $\rhom A {\oaun 1} M$ is an $A$-bimodule with the left $A$-action defined by
$$
(af)(\omega) = af(\omega), \qquad \mbox{for all $a\in A$, $f\in\rhom A {\oaun 1} M$ and $\omega \in \oaun 1$.}
$$
One easily checks that, for all  $a\in A$, the map 
$$
\rhom A {\oaun 1} M \to M, \qquad f \mapsto \nabla(fa) - \nabla(f)a,
$$ 
is a left $A$-module map, hence $\nabla: \rhom A {\oaun 1} M \to M$ is a {\em first order differential operator}; see \cite[Lemma~1]{DubMas:fir}. By \cite[Theorem~1]{DubMas:fir} $\nabla$ has a left universal symbol 
$$
\sigma_L(\nabla): \oaun 1\ot_A \rhom A {\oaun 1} M \to M,
$$ 
determined by
$$
\nabla(af) = a\nabla(f) +\sigma_L(\nabla)(da \ot_A f).
$$
This symbol takes particularly simple form for the morphisms $\varphi_{m,a}$ introduced in  Proposition~\ref{prop.hom}. By noting that $b\varphi_{m,a} = \varphi_{bm,a}$, one easily computes that, for all $a,b,c\in A$ and $m\in M$,
$$
\sigma_L(\nabla)(bdc\ot_A \varphi_{m,a}) = b[a,c]m.
$$

  \section*{Acknowledgments}
  I would like to thank Michel Dubois-Violette for helpful comments.
 This research  is  partially supported by the European Commission grant 
PIRSES-GA-2008-230836 and  the Polish Government grant 1261/7.PR UE/2009/7.

\end{document}